\documentclass[11pt,reqno]{amsart}

\usepackage{amscd,amssymb,amsmath,amsthm}
\usepackage[arrow,matrix]{xy}
\usepackage{graphicx}
\usepackage{epstopdf}
\usepackage{color}
\topmargin=0.1in
\textwidth6.0in
\textheight7.8in

\newtheorem{thm}{Theorem}
\newtheorem{defn}{Definition}
\newtheorem{lemma}{Lemma}

\newtheorem{rk}{Remark}
\newtheorem{cor}{Corollary}

\numberwithin{equation}{section} \setcounter{tocdepth}{1}



\begin{document}
\title[A regular gonosomal evolution operator]
{A Regular Gonosomal Evolution Operator with uncountable set of fixed points}

\author{A.T. Absalamov ~}
\address{ A.T. Absalamov, Samarkand State University,
Boulevard str., 140104,
Samarkand, Uzbekistan.}
\email{absalamov@gmail.com}
\author{U.A. Rozikov ~}
\address{U.A. Rozikov$^{a,b,c}$ \begin{itemize}
 \item[$^a$] V.I.Romanovskiy Institute of Mathematics of Uzbek Academy of Sciences;
\item[$^b$] AKFA University, 1st Deadlock 10, Kukcha Darvoza, 100095, Tashkent, Uzbekistan;
\item[$^c$] Faculty of Mathematics, National University of Uzbekistan.
\end{itemize}}
\email{rozikovu@yandex.ru}

\begin{abstract} In this paper we study dynamical systems generated by
a gonosomal evolution operator of a bisexual population.
We find explicitly all (uncountable set) of fixed points of the operator.
It is shown that each fixed point has eigenvalues less or equal to 1.
Moreover, we show that each trajectory converges to a fixed point, i.e. the operator is reqular. There are uncountable family of
invariant sets each of which consisting unique fixed point. Thus there is one-to-one correspondence between such invariant sets and the set of fixed points. Any trajectory  started at a point of the invariant
set converges to the corresponding fixed point.
\end{abstract}

\keywords{Dynamical systems; fixed point; invariant set, limit point.}
\subjclass[2010]{92D10 (34C05 37N25 92D15)}
\maketitle
\section{Introduction}

Population dynamics theory is important to a proper understanding of living populations
at all levels. This is a well developed branch of mathematical biology, which has a
history of more than two hundred years.

The book \cite{Ba} contains a short history of applications of mathematics to
solving various problems in population dynamics. For background and motivations of the theory of population dynamics see \cite{AR}-\cite{Varro.book.2015}.

In this paper we consider a bisexual population  which consists
females partitioned into types
indexed by $\{1,2,\dots,n\}$ and the males partitioned into types
indexed by $\{1,2,\dots,\nu \}$  (see \cite{Lyubich.book.1992}, \cite{Rpd}, \cite{Rozikov.book.2011} for details).

 Let $\gamma_{ik,j}^{(f)}$ and $\gamma_{ik,l}^{(m)}$ be inheritance coefficients
 defined as the probability that a female offspring is type $j$ and, respectively,
 that a male offspring is of type $l$, when the parental pair is
 $ik$ $(i,j=1,\dots,n$; and $k,l=1,\dots,\nu)$. These quantities satisfy the following
 \begin{equation}\label{kp}\begin{array}{ll}
\gamma_{ik,j}^{(f)}\geq 0, \ \ \gamma_{ik,l}^{(m)}\geq 0,\\[3mm]
 \sum_{j=1}^n\gamma_{ik,j}^{(f)}+\sum_{l=1}^\nu \gamma_{ik,l}^{(m)}=1, \ \ \mbox{for all} \ \ i,k,j,l.
\end{array}
\end{equation}
Define $(n+\nu-1)-$dimensional  simplex:
$$S^{n+\nu-1}=\left\{s=(x_1, \dots, x_n, y_1,\dots,y_\nu)\in \mathbb R^{n+\nu}: x_i\geq 0, y_j\geq 0, \sum_{i=1}^nx_i+\sum_{j=1}^\nu y_j=1\right\}.$$
Denote
$${\mathcal O}=\left\{s\in S^{n+\nu-1}: (x_1,\dots,x_n)=(0,\dots,0) \, \mbox{or} \, (y_1,\dots,y_\nu)=(0,\dots,0)\right\}.$$
$${\mathcal S}^{n,\nu}=S^{n+\nu-1}\setminus {\mathcal O}.$$

Following \cite{Rozikov.book.2016} define an evolution operator $V: {\mathcal S}^{n,\nu}\to {\mathcal S}^{n,\nu}$ (which is called
 normalized gonosomal operator) as
\begin{equation}\label{v3n}
V:\left\{ \begin{array}{ll}
x'_{j}\;=\;\dfrac{\sum_{i,k=1}^{n,\nu}\gamma_{ik,j}^{(f)}x_{i}y_{k}}{\Bigl(\sum_{i=1}^{n}x_{i}\Bigr)\left(\sum_{j=1}^{\nu}y_{j}\right)}, & j=1,\dots,n\medskip\\[3mm]
y'_{l}\;=\;\dfrac{\sum_{i,k=1}^{n,\nu}\gamma_{ik,l}^{(m)}x_{i}y_{k}}{\Bigl(\sum_{i=1}^{n}x_{i}\Bigr)\left(\sum_{j=1}^{\nu}y_{j}\right)}, & l=1,\dots,\nu.
\end{array}\right.
\end{equation}

{\bf The main problem:} For given operator $V$ and initial point $z^{(0)}\in {\mathcal S}^{n,\nu}$
what ultimately happens with the trajectory
$z^{(m)}=V(z^{(m-1)}), \ \ m=1,2,\dots$?
Does the limit $\lim_{m\to\infty} z^{(m)}$ exist?
If not what is the set of limit points of the sequence?

In general, this is very difficult problem.
In book \cite{Rpd} several recently obtained results related to this main problem are given.

In this paper we consider the special case: $n=\nu=2$ and
the following coefficients:
\begin{equation}\label{kk}
\begin{array}{cccc}
\gamma_{11,1}^{(f)}=a &\gamma_{11,2}^{(f)}=0& \gamma_{11,1}^{(m)}=b&\gamma_{11,2}^{(m)}=0\\[3mm]
\gamma_{12,1}^{(f)}=0 &\gamma_{12,2}^{(f)}=\sigma_1& \gamma_{12,1}^{(m)}=\sigma_2&\gamma_{12,2}^{(m)}=0\\[3mm]
\gamma_{21,1}^{(f)}=0 &\gamma_{21,2}^{(f)}=a& \gamma_{21,1}^{(m)}=b&\gamma_{21,2}^{(m)}=0\\[3mm]
\gamma_{22,1}^{(f)}=0 &\gamma_{22,2}^{(f)}=a& \gamma_{22,1}^{(m)}=0&\gamma_{22,2}^{(m)}=b.
\end{array}
\end{equation}
Then corresponding  evolution operator $W:S^{2,2}\rightarrow{S^{2,2}}$ is
\begin{equation}\label{A1}
W: \left\{
\begin{array}{ll}
\begin{aligned}
& x' &= \quad &\frac{axu}{(x+y)(u+v)} \\[2mm]
& y' &= \quad &\frac{\sigma_1xv+ayu+ayv}{(x+y)(u+v)} \\[2mm]
& u' &= \quad &\frac{\sigma_2xv+bxu+byu}{(x+y)(u+v)}\\[2mm]
& v' &= \quad &\frac{byv}{(x+y)(u+v)},
\end{aligned}
\end{array}
 \right.
\end{equation}
where coefficients satisfy
$$a+b=\sigma_1+\sigma_2=1, \ \ a, b, \sigma_1, \sigma_2>0.$$
\begin{rk} From the probabilities (\ref{kk}) one can notice that type $1$ of females (resp. type $2$ of males)
can be born only if both parents have type $1$ (resp. 2). Type $2$ of females (resp. type $1$ of males) can \emph{not} be
born if both parents have type $1$ (resp. $2$).
\end{rk}
For this operator $W$ and arbitrarily initial point
$s^{(0)}\in S^{2,2}$, we will study the trajectory
$\{s^{(m)}\}_{m=0}^{\infty}$, where $$s^{(m)}=W^{m}(s^{(0)})=
\underbrace {W(W(...W}_{m}(s^{(0)}))...).$$
\section{Fixed points}
\noindent A point $s$ is called a fixed point of the operator $W$
if $s=W(s)$. The set of all fixed points denoted by Fix$(W)$.

Let us find all the fixed points of $W$
given by \eqref{A1}, i.e.~we solve the following system of
equations for $(x,y,u,v)$:
\begin{equation}\label{F}
\left\{
\begin{array}{ll}
x(x+y)(u+v)=axu,\vspace{1,5mm} \cr
y(x+y)(u+v)=\sigma_1xv+ayu+ayv,\vspace{1,5mm} \cr
u(x+y)(u+v)=\sigma_2xv+bxu+byu,\vspace{1,5mm} \cr
v(x+y)(u+v)=byv.
\end{array}
\right.
\end{equation}
If $x=0$ then $y\neq0$ and from the second equation of the
system \eqref{F} we get $y=a$. In addition, the third and the fourth
equations of the system \eqref{F} give $u+v=b$.

If $v=0$ then $u\neq0$ and from the third equation of the
system \eqref{F} we get $u=b$. The second and the third
equations of the system \eqref{F} give $x+y=a$.

If $xv\neq0$ then we come to
\begin{equation}\label{I.par}
\left\{
\begin{array}{ll}
(x+y)(u+v)=au,\vspace{1,5mm} \cr
y(x+y)(u+v)=\sigma_1xv+ayu+ayv,\vspace{1,5mm} \cr
u(x+y)(u+v)=\sigma_2xv+bxu+byu,\vspace{1,5mm} \cr
(x+y)(u+v)=by.
\end{array}
\right.
\end{equation}

The first and the second equations of the system \eqref{I.par}
give $$\sigma_1x+ay=0.$$ At the same time the third and the fourth
equations of the system \eqref{I.par} give $$\sigma_2v+bu=0.$$
Since $a,b,\sigma_1,\sigma_2>0$ then when we solve the last two
equations we obtain $x=y=u=v=0$, however this point is not in
the space $S^{2,2}$. Thus the set of all fixed points of operator \eqref{A1}   is Fix$(W)=F_{11}\cup F_{12}$, where
$$F_{11}=\Big\{(0,a,u,v):
\quad u+v=b, \quad u, \, v \in[0,b] \Big\}$$ and
$$F_{12}=\Big\{(x,y,b,0):\quad x+y=a, \quad x,\,y\in[0,a]\Big\}.$$

\begin{defn}
A fixed point $s$ of the operator $W$ is called hyperbolic if its
Jacobian $J$ at $s$ has no eigenvalues on the unit circle.
\end{defn}

\begin{defn} A hyperbolic fixed point $s$ is called:

i) attracting if all the eigenvalues of the Jacobi matrix $J(s)$
are less than 1 in absolute value;

ii) repelling if all the eigenvalues of the Jacobi matrix $J(s)$
are greater than 1 in absolute value;

iii) a saddle otherwise.
\end{defn}

It is not hard to see that
$\lambda_1=0$, $\lambda_2=1$, $\lambda_3=1-\frac{v}{b}$ and
$\lambda_1=0$, $\lambda_2=1$, $\lambda_3=1-\frac{x}{a}$ are eigenvalues
of the fixed points of the forms $F_{11}$ and $F_{12}$ respectively.
By these definitions we see that all fixed points of the operator
\eqref{A1} are nonhyperbolic fixed points.

\section{Limit set}
Denote
$${\partial{S^{2,2}}}=\{t=(x,y,u,v)\in S^{2,2}: xyuv=0\}.$$

Take any initial point $t=(x,y,u,v)\in{\partial{S^{2,2}}}$. Consider the
following subsets of ${\partial{S^{2,2}}}$.
\begin{center}
$E_1=\big\{(x,y,u,v)\in{\partial{S^{2,2}}}: x=0 \big\}$, \\
$E_2=\big\{(x,y,u,v)\in{\partial{S^{2,2}}}: y=0 \big\}$,\\
$E_3=\big\{(x,y,u,v)\in{\partial{S^{2,2}}}: u=0 \big\}$,\\
$E_4=\big\{(x,y,u,v)\in{\partial{S^{2,2}}}: v=0 \big\}$.
\end{center}

If $t=(x,y,u,v)\in{E_1}$ then $W(t)\in{F_{11}}$. If $t=(x,y,u,v)\in{E_4}$
then $W(t)\in{F_{12}}$. \\When $t=(x,y,u,v)\in{E_2}$ then $W(t)\in{E_4}$ and
$W^{2}(t)\in{F_{12}}$. \\When $t=(x,y,u,v)\in{E_3}$ then $W(t)\in{E_1}$ and $W^{2}(t)\in{F_{11}}$.

Now we take any initial point $t=(x,y,u,v)\in{S^{2,2}}\backslash{\partial{S^{2,2}}}$.

Introduce the following notations

\begin{equation}\label{A9}
\alpha=\frac{x}{x+y}, \quad \beta=\frac{v}{u+v}, \quad
\alpha'=\frac{x'}{x'+y'}, \quad \beta'=\frac{v'}{u'+v'},
\end{equation}

which yields the nonlinear dynamical system
\begin{equation}\label{A10}
V:
\begin{cases}
\displaystyle
\alpha'=\frac{\alpha(1-\beta)}{1+(p_1-1)\alpha\beta}, \\[2ex]
\displaystyle
\beta'=\frac{\beta(1-\alpha)}{1+(p_2-1)\alpha\beta}
\end{cases}
\end{equation}
with the initial point $(\alpha^{(0)}, \beta^{(0)})\in\Delta$, where
\begin{equation}
\Delta:=\{(\alpha,\beta)\in{\mathbb{R}^{2}}: 0\leq\alpha\leq1,\,\,
0\leq\beta\leq1\}=[0,1]^2,
\end{equation}
and $$p_1=\frac{\sigma_1}{a}, \quad \quad p_2=\frac{\sigma_2}{b}. $$
There are three cases for $p_1$, $p_2$.
\begin{equation}\label{cases}
\begin{aligned}
& 1. \quad p_1=p_2=1, \\
& 2. \quad p_1>1>p_2>0, \\
& 3. \quad p_2>1>p_1>0.
\end{aligned}
\end{equation}

In order to find the fixed points of the operator \eqref{A10} we solve
the following system of equations for $(\alpha,\beta)$
\begin{equation}\label{A11}
\left\{
\begin{array}{ll}
\alpha(1+(p_1-1)\alpha\beta)=\alpha(1-\beta), \\[2ex]
\beta(1+(p_2-1)\alpha\beta)=\beta(1-\alpha).
\end{array}
\right.
\end{equation}
This system of equations gives us $\alpha\cdot\beta=0$, that is
$s_1=(\alpha,0)$ and $s_2=(0,\beta)$ are fixed points for the
operator \eqref{A10} where $\alpha\geq0$, $\beta\geq0$.

Using the system of equations \eqref{A10} we obtain

\begin{equation}\label{A12}
\begin{aligned}
\alpha^{(m+1)}=\frac{\alpha^{(m)}(1-\beta^{(m)})}{1+(p_1-1)
\alpha^{(m)}\beta^{(m)}}, \\[2ex]
\displaystyle
\beta^{(m+1)}=\frac{\beta^{(m)}(1-\alpha^{(m)})}{1+(p_2-1)
\alpha^{(m)}\beta^{(m)}}.
\end{aligned}
\end{equation}

\begin{lemma}\label{cs}
For any initial point $(\alpha,\beta)\in[0,1]^2$ it holds that
$$0\leq\alpha^{(m+1)}\leq\alpha^{(m)},  \quad 0\leq\beta^{(m+1)}\leq\beta^{(m)}.$$
In particular, the sequences
$\alpha^{(m)}=\frac{x^{(m)}}{x^{(m)}+y^{(m)}}, \ \ m\geq1$
and $\beta^{(m)}=\frac{v^{(m)}}{u^{(m)}+v^{(m)}}, \ \ m\geq1$
are convergent.
\end{lemma}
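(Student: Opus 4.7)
The plan is to prove the invariance of the square $[0,1]^2$ under $V$ and the coordinatewise monotone decrease of the iterates by induction on $m$, after first checking that the denominators in \eqref{A12} stay strictly positive so that the recursion is well defined.

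First I would verify positivity of the denominators. Since $\alpha,\beta\in[0,1]$ implies $\alpha\beta\in[0,1]$, one has in case $p_1\geq 1$ the trivial bound $1+(p_1-1)\alpha\beta\geq 1$, while in case $0<p_1<1$ one has $(p_1-1)\alpha\beta\geq (p_1-1)\cdot 1=p_1-1$, so $1+(p_1-1)\alpha\beta\geq p_1>0$. The same argument with $p_2$ handles the $\beta$-recursion. Using the three cases listed in \eqref{cases}, the denominators are therefore bounded below by $\min\{1,p_1\}>0$ and $\min\{1,p_2\}>0$ respectively throughout the iteration.

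Next I would prove the key inequality $\alpha^{(m+1)}\leq\alpha^{(m)}$ (assuming by induction that $(\alpha^{(m)},\beta^{(m)})\in[0,1]^2$). From \eqref{A12} we have
\begin{equation*}
\alpha^{(m+1)}=\alpha^{(m)}\cdot\frac{1-\beta^{(m)}}{1+(p_1-1)\alpha^{(m)}\beta^{(m)}},
\end{equation*}
so since $\alpha^{(m)}\geq 0$ and the denominator is positive, the claim reduces to
\begin{equation*}
1-\beta^{(m)}\leq 1+(p_1-1)\alpha^{(m)}\beta^{(m)},
\end{equation*}
i.e.\ $\beta^{(m)}\bigl(1+(p_1-1)\alpha^{(m)}\bigr)\geq 0$. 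The bracket is again $\geq 1$ if $p_1\geq 1$ and $\geq p_1>0$ if $p_1<1$ (using $\alpha^{(m)}\leq 1$), so the inequality holds. Together with non-negativity of numerator and denominator this gives $0\leq\alpha^{(m+1)}\leq\alpha^{(m)}\leq 1$, closing the induction; the argument for $\beta^{(m+1)}\leq\beta^{(m)}$ is symmetric with the roles of $\alpha,\beta$ and $p_1,p_2$ interchanged.

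Finally, each of the sequences $\{\alpha^{(m)}\}$ and $\{\beta^{(m)}\}$ is monotone non-increasing and bounded below by $0$, hence convergent by the monotone convergence theorem. For the last assertion, one checks directly from \eqref{A1} (substituting $\alpha=x/(x+y)$, $\beta=v/(u+v)$ and simplifying $x'+y'=a[1+(p_1-1)\alpha\beta]$, $u'+v'=b[1+(p_2-1)\alpha\beta]$) that the normalized ratios $x^{(m)}/(x^{(m)}+y^{(m)})$ and $v^{(m)}/(u^{(m)}+v^{(m)})$ satisfy precisely the system \eqref{A12}, so their convergence follows from the convergence of $\alpha^{(m)}$ and $\beta^{(m)}$. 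The only delicate point here—and the step I would write out most carefully—is the uniform positive lower bound for the denominator, since if one overlooked the case $p_i<1$ the monotonicity argument would collapse; once that bound is in place everything is elementary.
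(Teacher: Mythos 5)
Your proof is correct and follows essentially the same route as the paper: the paper likewise bounds $1+(p_1-1)\alpha^{(m)}$ and $1+(p_1-1)\alpha^{(m)}\beta^{(m)}$ below by $\min\{1,p_1\}>0$ (and similarly with $p_2$), and establishes monotonicity by writing $\alpha^{(m+1)}-\alpha^{(m)}=\frac{-\alpha^{(m)}\beta^{(m)}\bigl(1+(p_1-1)\alpha^{(m)}\bigr)}{1+(p_1-1)\alpha^{(m)}\beta^{(m)}}\leq 0$, which is the same comparison you carry out in ratio form, followed by monotone convergence. Your explicit induction keeping the iterates in $[0,1]^2$ and the check that the normalized ratios from \eqref{A1} satisfy \eqref{A12} merely spell out details the paper leaves implicit.
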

\begin{proof} Since $V:[0,1]^2\rightarrow[0,1]^2$ and for any $m\in \mathbb{N}$

\begin{center}
$1+(p_1-1)\alpha^{(m)}\in[\min\{1,p_1\}; \max\{1,p_1\}]$, \\
$1+(p_2-1)\beta^{(m)}\in[\min\{1,p_2\}; \max\{1,p_2\}]$, \\
$1+(p_1-1)\alpha^{(m)}\beta^{(m)}\in[\min\{1,p_1\}; \max\{1,p_1\}]$, \\
$1+(p_2-1)\alpha^{(m)}\beta^{(m)}\in[\min\{1,p_2\}; \max\{1,p_2\}]$
\end{center} then it holds that

$$\alpha^{(m+1)}-\alpha^{(m)}=\frac{-\alpha^{(m)}\beta^{(m)}(1+(p_1-1)\alpha^{(m)})}{1+(p_1-1)\alpha^{(m)}\beta^{(m)}}\leq0,$$
and that
$$\beta^{(m+1)}-\beta^{(m)}=\frac{-\alpha^{(m)}\beta^{(m)}(1+(p_2-1)\beta^{(m)})}{1+(p_2-1)\alpha^{(m)}\beta^{(m)}}\leq0.$$
This completes the proof.
\end{proof}
\begin{thm}\label{ta}
For any initial point $(x,y,u,v)\in{S^{2,2}}$
the sequence $$W^m(x,y,u,v)=(x^{(m)},y^{(m)},u^{(m)},v^{(m)})$$ is convergent and
$$\lim\limits_{m\rightarrow\infty}x^{(m)}\cdot{v^{(m)}}=0.$$
\end{thm}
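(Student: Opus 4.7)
The plan is to reduce the four-dimensional dynamics on $S^{2,2}$ to the two-dimensional system (\ref{A10}) for $(\alpha,\beta)$, already set up above, and then lift all conclusions back to $W^m(x,y,u,v)$ via an explicit change of variables.

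First I would dispense with the boundary case $(x,y,u,v)\in\partial S^{2,2}$: the case analysis preceding the theorem shows that every such trajectory enters $F_{11}\cup F_{12}$ within two iterates and then stays there, so both convergence and $x^{(m)}v^{(m)}=0$ are immediate. For an interior initial point, $\alpha^{(m)}=x^{(m)}/(x^{(m)}+y^{(m)})$ and $\beta^{(m)}=v^{(m)}/(u^{(m)}+v^{(m)})$ remain well-defined for every $m$, and by Lemma \ref{cs} they are monotone decreasing in $[0,1]$, hence convergent to some limits $\alpha^*,\beta^*$.

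The key step is to show $\alpha^*\beta^*=0$. For this I would use the identity
\[\alpha^{(m)}-\alpha^{(m+1)}=\frac{\alpha^{(m)}\beta^{(m)}\bigl(1+(p_1-1)\alpha^{(m)}\bigr)}{1+(p_1-1)\alpha^{(m)}\beta^{(m)}}\]
already derived in the proof of Lemma \ref{cs}. The left-hand side tends to $0$ by convergence of $\alpha^{(m)}$, while both the denominator and the factor $1+(p_1-1)\alpha^{(m)}$ are bounded below by $\min\{1,p_1\}>0$. This forces $\alpha^{(m)}\beta^{(m)}\to 0$, hence $\alpha^*\beta^*=0$.

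Finally I would rewrite (\ref{A1}) directly in the $(\alpha,\beta)$ variables: a short calculation gives the closed-form updates $x^{(m+1)}=a\alpha^{(m)}(1-\beta^{(m)})$ and $v^{(m+1)}=b(1-\alpha^{(m)})\beta^{(m)}$, together with analogous explicit expressions for $y^{(m+1)}$ and $u^{(m+1)}$. Convergence of $W^m(x,y,u,v)$ then follows immediately from convergence of $(\alpha^{(m)},\beta^{(m)})$ by continuity, and the factorization $x^{(m+1)}v^{(m+1)}=ab\,\alpha^{(m)}\beta^{(m)}(1-\alpha^{(m)})(1-\beta^{(m)})$ delivers $x^{(m)}v^{(m)}\to 0$. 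I expect the only delicate step to be the deduction $\alpha^{(m)}\beta^{(m)}\to 0$, which rests essentially on the strict positivity of $a,b,\sigma_1,\sigma_2$ to keep the denominator uniformly away from zero; everything else reduces to algebraic rewriting.
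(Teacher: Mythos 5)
Your proposal is correct, and its skeleton coincides with the paper's proof: pass to the variables \eqref{A9}, use the monotonicity of Lemma \ref{cs}, and lift everything back through the explicit formulas \eqref{A13} (your closed forms for $x^{(m+1)},y^{(m+1)},u^{(m+1)},v^{(m+1)}$ are exactly those of the paper), so that convergence of $(\alpha^{(m)},\beta^{(m)})$ gives convergence of $W^m$ and $x^{(m+1)}v^{(m+1)}=ab\,\alpha^{(m)}\beta^{(m)}(1-\alpha^{(m)})(1-\beta^{(m)})\to 0$. The one step you handle differently is the deduction $\alpha^{(m)}\beta^{(m)}\to 0$: the paper argues that the monotone limit is a fixed point of the continuous map $V$, and all fixed points of \eqref{A10} satisfy $\alpha\beta=0$ (from \eqref{A11}), whereas you read it off quantitatively from the difference identity in the proof of Lemma \ref{cs}, using that both the factor $1+(p_1-1)\alpha^{(m)}$ and the denominator $1+(p_1-1)\alpha^{(m)}\beta^{(m)}$ are bounded below by $\min\{1,p_1\}>0$. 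Both are sound; your variant is slightly more self-contained, since it does not rely on the classification of the fixed points of $V$, while the paper's is shorter given that \eqref{A11} has already been solved. Your preliminary treatment of the boundary case is harmless but unnecessary: since $x+y>0$ and $u+v>0$ on all of $S^{2,2}$, the quantities $\alpha^{(m)},\beta^{(m)}$ are well defined for every initial point, so the main argument applies uniformly.
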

\begin{proof} By Lemma \ref{cs} all trajectories of the operator \eqref{A10} have a limit point
and since the operator is continuous, each trajectory converges to a fixed point $s_1=(\alpha,0)$
or $s_2=(0,\beta)$. Therefore we have always
$$\alpha^{(m)}\cdot\beta^{(m)}\rightarrow{0}, \ \ \mbox{as} \ \ m\rightarrow\infty.$$

In a view of \eqref{A1} and \eqref{A9} we get
\begin{equation}\label{A13}
\left\{
\begin{array}{ll}
\begin{aligned}
& x^{(m+1)}  &= \, &\frac{ax^{(m)}u^{(m)} }{(x^{(m)}+y^{(m)})
(u^{(m)}+v^{(m)})} &=&a\alpha^{(m)}(1-\beta^{(m)}), \\[2mm]
& y^{(m+1)}  &= \, &\frac{\sigma_1x^{(m)}v^{(m)}+ay^{(m)}u^{(m)}+
ay^{(m)}v^{(m)}}{(x^{(m)}+y^{(m)})(u^{(m)}+v^{(m)})} &=& \sigma_1
\alpha^{(m)}\beta^{(m)}+a(1-\alpha^{(m)}), \\[2mm]
& u^{(m+1)}  &= \, &\frac{\sigma_2x^{(m)}v^{(m)}+bx^{(m)}u^{(m)}+
by^{(m)}u^{(m)}}{(x^{(m)}+y^{(m)})(u^{(m)}+v^{(m)})} &=& \sigma_2
\alpha^{(m)}\beta^{(m)}+b(1-\beta^{(m)}), \\[2mm]
& v^{(m+1)}  &= \, &\frac{by^{(m)}v^{(m)}}{(x^{(m)}+y^{(m)})
(u^{(m)}+v^{(m)})} &=& b\beta^{(m)}(1-\alpha^{(m)}).
\end{aligned}
\end{array}
 \right.
\end{equation}
This completes the proof.
\end{proof}
Define the following sets:
\begin{align*}
T_0 &= \Big\{(x,y,u,v)\in{S^{2,2}}: \, \lim\limits_
{m\rightarrow\infty}x^{(m)}=\lim\limits_
{m\rightarrow\infty}v^{(m)}=0\  \Big\}, \\
T_1 &= \Big\{(x,y,u,v)\in{S^{2,2}}: \, \lim\limits_
{m\rightarrow\infty}v^{(m)}=0, \quad \lim\limits_
{m\rightarrow\infty}x^{(m)}\in(0,a]  \Big\}, \\
T_2 &= \Big\{(x,y,u,v)\in{S^{2,2}}: \, \lim\limits_
{m\rightarrow\infty}x^{(m)}=0, \quad \lim\limits_
{m\rightarrow\infty}v^{(m)}\in(0,b] \Big\}.
\end{align*}
If $t=(x,y,u,v)\in{T_0}$, then
\begin{equation}
\lim\limits_{m\rightarrow\infty}\beta^{(m)}=\lim\limits_{m\rightarrow\infty}
\alpha^{(m)}=0.
\end{equation}
and \eqref{A13} shows that for any initial point
$t=(x,y,u,v)\in{T_0}$ for the trajectories of the operator \eqref{A1}
we have $$W_1^{m}=(x^{(m)},y^{(m)},u^{(m)},v^{(m)})\rightarrow
{(0,a,b,0)} \quad \mbox{as} \, \,  m \, \,\mbox{tends \, to} \, \, \infty.$$
If $t=(x,y,u,v)\in{T_1}$, then
\begin{equation}
\lim\limits_{m\rightarrow\infty}\beta^{(m)}=0 \ \
\mbox{and} \ \  \lim\limits_{m\rightarrow\infty}
\alpha^{(m)}=\alpha_0\in(0,1].
\end{equation}
System of equations \eqref{A13} shows that for any initial point
$t=(x,y,u,v)\in{T_1}$ for the trajectories of the operator \eqref{A1}
we have $$W_1^{m}=(x^{(m)},y^{(m)},u^{(m)},v^{(m)})\rightarrow
{\Big(a\alpha_0, a(1-\alpha_0),b,0\Big)}\in{F_{12}}
\quad \mbox{as} \, \,  m \, \,\mbox{tends \, to} \, \, \infty.$$
If $t=(x,y,u,v)\in{T_2}$, then
\begin{equation}
\lim\limits_{m\rightarrow\infty}\alpha^{(m)}=0 \ \
\mbox{and} \ \  \lim\limits_{m\rightarrow\infty}
\beta^{(m)}=\beta_0\in(0,1].
\end{equation}
System of equations \eqref{A13} shows that for any initial point
$t=(x,y,u,v)\in{T_2}$ for the trajectories of the operator \eqref{A1}
we have $$W_1^{m}=(x^{(m)},y^{(m)},u^{(m)},v^{(m)})\rightarrow
{\Big(0,a,b(1-\beta_0),b\beta_0\Big)}\in{F_{11}}
\quad \mbox{as} \, \,  m \, \,\mbox{tends \, to} \, \, \infty.$$
Therefore we have the following
\begin{cor} For any initial point $t=(x,y,u,v)\in{S^{2,2}}$
the $\omega$-limit set $\omega(t)$ of the operator \eqref{A1}  consists a single point and
\begin{equation}
\omega(t)\in\left\{\begin{array}{ll}
\{(0,a,b,0)\} \ \ \mbox{if} \ \ t=(x,y,u,v)\in{T_0},\\[2mm]
{F_{12}} \ \ \mbox{if} \ \ t=(x,y,u,v)\in{T_1},\\[2mm]
{F_{11}} \ \ \mbox{if} \ \ t=(x,y,u,v)\in{T_2}.
\end{array}\right.
\end{equation}
\end{cor}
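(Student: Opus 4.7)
The plan is to combine the convergence statement of Theorem~\ref{ta} with the explicit update formulas \eqref{A13} and verify that $S^{2,2} = T_0 \sqcup T_1 \sqcup T_2$, so that one of the three cases must apply to every initial point.

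First I would check that the partition is exhaustive and disjoint. By Theorem~\ref{ta} the sequence $W^m(t)$ converges, so in particular the scalar limits $x_\infty := \lim x^{(m)}$ and $v_\infty := \lim v^{(m)}$ exist, and $x_\infty\cdot v_\infty=0$. The three sets $T_0,T_1,T_2$ exactly enumerate the possibilities (either both zero, only $v_\infty=0$, or only $x_\infty=0$), so they cover $S^{2,2}$ and are pairwise disjoint.

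Next I would identify the limit in each case using \eqref{A13}. A preliminary observation is that the total mass on each sex stabilises: summing the first two lines of \eqref{A13} gives
\begin{equation*}
x^{(m+1)}+y^{(m+1)} = a + (\sigma_1-a)\alpha^{(m)}\beta^{(m)},
\end{equation*}
and analogously $u^{(m+1)}+v^{(m+1)} = b + (\sigma_2-b)\alpha^{(m)}\beta^{(m)}$. Since $\alpha^{(m)}\beta^{(m)}\to 0$ by Lemma~\ref{cs} combined with the argument of Theorem~\ref{ta}, one obtains $x^{(m)}+y^{(m)}\to a$ and $u^{(m)}+v^{(m)}\to b$. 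This guarantees that $\alpha^{(m)}=x^{(m)}/(x^{(m)}+y^{(m)})$ and $\beta^{(m)}=v^{(m)}/(u^{(m)}+v^{(m)})$ are well defined in the limit and converge to $\alpha_0 = x_\infty/a$ and $\beta_0 = v_\infty/b$ respectively.

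Finally I would substitute $(\alpha_0,\beta_0)$ into \eqref{A13} and read off the limit of $(x^{(m)},y^{(m)},u^{(m)},v^{(m)})$. In case $T_0$ we get $\alpha_0=\beta_0=0$, hence the limit is $(0,a,b,0)$. In case $T_1$ we have $\beta_0=0$, $\alpha_0\in(0,1]$, producing $(a\alpha_0,a(1-\alpha_0),b,0)\in F_{12}$. In case $T_2$, symmetrically, the limit is $(0,a,b(1-\beta_0),b\beta_0)\in F_{11}$. Since every trajectory converges to a single point, $\omega(t)$ is that single point, which establishes the claimed trichotomy. The only mildly delicate step is the stabilisation of $x^{(m)}+y^{(m)}$ and $u^{(m)}+v^{(m)}$ to strictly positive limits; everything else is a direct substitution.
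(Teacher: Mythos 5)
Your proposal is correct and takes essentially the same route as the paper: convergence from Theorem~\ref{ta} (so the trichotomy $S^{2,2}=T_0\cup T_1\cup T_2$ is exhaustive) followed by direct substitution of the limits $\alpha_0,\beta_0$ into \eqref{A13}. Your explicit check that $x^{(m)}+y^{(m)}\to a$ and $u^{(m)}+v^{(m)}\to b$, which identifies $\alpha_0=x_\infty/a$ and $\beta_0=v_\infty/b$, is a small step the paper leaves implicit but does not change the argument.
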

\begin{defn}
An operator $W$ is called regular if for any initial point $s^{(0)} \in S^{2,2}$, the limit
\[\lim_{m\to \infty}W^m(s^{(0)}) \]
exists.
\end{defn}
The following is a corollary of Theorem \ref{ta}.
\begin{cor} The operator \eqref{A1}  is regular.
\end{cor}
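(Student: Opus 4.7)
The plan is essentially to observe that this is an immediate restatement of Theorem \ref{ta}. By Definition of regularity, I need to produce, for every $s^{(0)} \in S^{2,2}$, a limit of the sequence $W^m(s^{(0)})$; but Theorem \ref{ta} already asserts precisely that the sequence $W^m(x,y,u,v)$ converges for every $(x,y,u,v)\in S^{2,2}$. So the corollary is literally a one-line invocation: apply Theorem \ref{ta}, conclude that the limit exists, and read off regularity from the definition.

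For my own bookkeeping I would briefly verify that Theorem \ref{ta} indeed covers all of $S^{2,2}$, not just the interior, since the definition of regularity quantifies over the whole of $S^{2,2}$. Points in $\partial S^{2,2}$ are handled directly in the text preceding Theorem \ref{ta}: each of the four faces $E_1,\ldots,E_4$ is mapped into $F_{11}\cup F_{12}$ in at most two iterations, and since $F_{11}\cup F_{12}=\mathrm{Fix}(W)$ the trajectory then stabilizes. Interior points are handled by the reduction \eqref{A9}--\eqref{A10}, which is valid because $S^{2,2}$ excludes the set $\mathcal{O}$ where $x+y=0$ or $u+v=0$; Lemma \ref{cs} provides monotone and bounded, hence convergent, sequences $\alpha^{(m)}$ and $\beta^{(m)}$, and the identities \eqref{A13} transfer this convergence back to the original coordinates $(x^{(m)},y^{(m)},u^{(m)},v^{(m)})$.

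There is no real obstacle here; the substantive work (monotonicity in Lemma \ref{cs}, the case split into $T_0,T_1,T_2$, and the classification of $\omega$-limits) has already been carried out. The only thing to be careful about is not to overstate the conclusion: regularity asserts merely existence of the limit, whereas the preceding corollary gives the sharper classification of the limit point in terms of which of $T_0,T_1,T_2$ contains the initial condition. For the present statement, the weaker existence claim is enough, and it follows instantaneously from Theorem \ref{ta}.
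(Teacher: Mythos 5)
Your proposal matches the paper exactly: the paper states this corollary as an immediate consequence of Theorem \ref{ta}, since that theorem already gives convergence of $W^m(x,y,u,v)$ for every initial point in $S^{2,2}$, which is precisely the definition of regularity. Your additional check that boundary and interior points are both covered is sound and consistent with the paper's treatment.
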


We would like to describe the sets $T_0$, $T_1$ and $T_2$ implicitly.
\subsection{Case 1}
Let we have $$p_1=p_2=1.$$ Then operator \eqref{A10} looks like:
\begin{equation}\label{V1}
V_1:\left\{\begin{array}{ll}
\alpha'=\alpha-\alpha\beta\\[2mm]
\beta'=\beta-\alpha\beta
\end{array}\right.
\end{equation}
where $(\alpha;\beta)\in\Delta$.

$s_1=(\alpha,0)$ and $s_2=(0,\beta)$ are non-hyperbolic
fixed points of \eqref{V1} with the eigenvalues $\lambda_1=1$,
$\lambda_2=1-\alpha\in[0,1]$ and $\lambda_1=1$,
$\lambda_2=1-\beta\in[0,1]$ respectively.

We say the set $E$ is invariant respect to the operator $V$ if $V(E)\subset{E}$.
\begin{lemma}The following sets
$$M_0=\big\{(\alpha,\beta)\in[0,1]^2: \quad
\beta=\alpha\big\}$$
$$M_1=\big\{(\alpha,\beta)\in[0,1]^2: \quad
\beta<\alpha\big\}$$ and $$M_2=\big\{(\alpha,\beta)
\in[0,1]^2: \quad \beta>\alpha\big\}$$
are invariant sets respect to the operator \eqref{V1}.
\end{lemma}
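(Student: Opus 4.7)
The plan is to exploit a conservation law: under $V_1$, the quantity $\alpha-\beta$ is preserved, so the sign of $\alpha-\beta$ is preserved, which immediately gives invariance of the three sets $M_0$, $M_1$, $M_2$.

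\medskip

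First I would compute directly
$$\alpha'-\beta' = (\alpha-\alpha\beta)-(\beta-\alpha\beta) = \alpha-\beta.$$
Thus $V_1$ acts as the identity on the coordinate $\alpha-\beta$. Next I would check that $V_1$ maps $[0,1]^2$ into itself: since $(\alpha,\beta)\in[0,1]^2$ we have $\alpha'=\alpha(1-\beta)\in[0,1]$ and $\beta'=\beta(1-\alpha)\in[0,1]$, so $(\alpha',\beta')\in\Delta$. (This also follows from the observation preceding Lemma \ref{cs} that $V$ maps $[0,1]^2$ into itself.)

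\medskip

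Combining these two facts gives the three invariances at once. If $(\alpha,\beta)\in M_0$, i.e.~$\beta=\alpha$, then $\beta'-\alpha'=\beta-\alpha=0$, so $V_1(\alpha,\beta)\in M_0$. If $(\alpha,\beta)\in M_1$, i.e.~$\beta<\alpha$, then $\beta'-\alpha'=\beta-\alpha<0$, so $V_1(\alpha,\beta)\in M_1$. Symmetrically $V_1(M_2)\subset M_2$.

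\medskip

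There is essentially no obstacle here: the entire argument rests on the algebraic identity $\alpha'-\beta'=\alpha-\beta$, which is an immediate one-line cancellation. The only point worth stating explicitly is that one must verify the image lies in $[0,1]^2$ before talking about membership in $M_0$, $M_1$, $M_2$, but this is also straightforward from the factorized form $\alpha'=\alpha(1-\beta)$, $\beta'=\beta(1-\alpha)$.
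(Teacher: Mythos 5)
Your argument is correct: the identity $\alpha'-\beta'=\alpha-\beta$ together with the observation that $V_1$ maps $[0,1]^2$ into itself immediately yields the invariance of $M_0$, $M_1$, $M_2$. The paper dismisses this lemma as ``Straightforward,'' and your conservation-law computation is precisely the natural one-line verification it has in mind.
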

\begin{proof} Straightforward.
\end{proof}
We look for the invariant curves of the operator
\eqref{V1}. Let $\beta=g(\alpha)$ be an invariant
curve then $\beta'=g(\alpha')$ and to find invariant
curve leads to solve the following iterative functional
equation
\begin{equation}\label{iterative f.e.1}
f(\alpha)\big(\alpha-f(\alpha)\big)(1-\alpha)
=\alpha\big(f(\alpha)-f(f(\alpha))\big)
\end{equation}
where
$f(\alpha)=\alpha\big(1-g(\alpha)\big)$ which is not identically zero.

We solve \eqref{iterative f.e.1} in the space $C^\infty{[0,1]}$.

The equation \eqref{iterative f.e.1} gives $f(0)=0$.
Moreover from $f\in{C^\infty{[0,1]}}$ we get
\begin{equation}\label{f}
f(\alpha)=\sum_{k=1}^{\infty}c_k\alpha^k
\end{equation}
and
\begin{equation}\label{ff}
f(f(\alpha))=\sum_{k=1}^{\infty}c_kf^k(\alpha)=
\sum_{k=1}^{\infty}d_k\alpha^k
\end{equation}
where $$d_k=\sum_{l=1}^
{k}c_l\big(\sum_{i_1+i_2+...+i_l=k}c_{i_1}\cdot{c_{i_2}}\cdot...
\cdot{c_{i_l}}\big).$$
\begin{thm}
The solutions of the functional equation \eqref{iterative f.e.1}
are $$ f(\alpha)=\alpha \ \ \mbox{and} \ \ f(\alpha)=\theta\alpha-\alpha^2
$$ where $\theta$ is an arbitrary constant.

In particular $$g(\alpha)=0 \ \ \mbox{and} \ \ g(\alpha)=
\alpha+1-\theta$$ are the only invariant curves of the operator
\eqref{V1}.
\end{thm}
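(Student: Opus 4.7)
The plan is to substitute the power series expansion $f(\alpha)=\sum_{k\geq 1}c_k\alpha^k$ (the condition $f(0)=0$ is forced by setting $\alpha=0$ in \eqref{iterative f.e.1}) into the functional equation and determine the $c_k$ inductively by equating coefficients of $\alpha^n$ on the two sides for each $n\geq 2$.

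First I would verify directly that the two candidate families solve \eqref{iterative f.e.1}: for $f(\alpha)=\alpha$ both sides vanish identically, and for $f(\alpha)=\theta\alpha-\alpha^{2}$ a short polynomial computation shows that both sides collapse to $\alpha^{2}(\theta-\alpha)(1-\theta+\alpha)(1-\alpha)$. The corresponding curves $g(\alpha)=0$ and $g(\alpha)=\alpha+1-\theta$ then come from $g(\alpha)=1-f(\alpha)/\alpha$.

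For the converse, coefficient matching at the low orders provides the structural skeleton. The $\alpha^{2}$-coefficients agree automatically at $c_1(1-c_1)$. The $\alpha^{3}$-comparison yields the factored identity
\[
c_1(1-c_1)(1+c_2)=0,
\]
so there are three branches: $c_1=0$, $c_1=1$, or $c_2=-1$. The $\alpha^{4}$-comparison produces
\[
c_1(1-c_1)(1+c_1)\,c_3 = -(1-2c_1)\,c_2(1+c_2),
\]
which, combined with the previous relation, forces $c_3=0$ in the branch $c_2=-1$ and $c_2(1+c_2)=0$ in the branch $c_1=1$.

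From here I would proceed by induction on $n$: expand $(1-\alpha)f(\alpha)(\alpha-f(\alpha))$ and $\alpha(f(\alpha)-f(f(\alpha)))$ using \eqref{f}--\eqref{ff}, and show that the $\alpha^n$-coefficient relation has the form $\kappa_n(c_1)\,c_n = R_n(c_1,\dots,c_{n-1})$ where the leading multiplier $\kappa_n(c_1)$ is non-zero in each branch. Assuming inductively that the $c_k$ with $k<n$ already coincide with those of the corresponding candidate, the right-hand side $R_n$ vanishes, forcing $c_n=0$. In the branch $c_2=-1$ (with $c_1=\theta$ arbitrary) this yields $f(\alpha)=\theta\alpha-\alpha^{2}$; in the branch $c_1=1,\,c_2=0$ it yields $f(\alpha)=\alpha$ (the alternative $c_2=-1$ merges into the previous family with $\theta=1$); and in the branch $c_1=0$ the recursion collapses to $f\equiv 0$, which is excluded by hypothesis. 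Translating back via $g=1-f/\alpha$ produces the two invariant curves of the statement.

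The main obstacle will be the inductive step, specifically checking that $\kappa_n(c_1)$ is non-zero in the relevant branches so that $c_n$ is actually determined, and tracking the nested composition expansion of $f\circ f$ encoded in the combinatorial coefficients $d_k$ in \eqref{ff}. Beyond that, the argument is routine coefficient bookkeeping.
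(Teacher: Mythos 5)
Your strategy is the same as the paper's: expand $f(\alpha)=\sum_k c_k\alpha^k$, equate coefficients in \eqref{iterative f.e.1}, pin down the low-order coefficients, and then induct to kill all higher ones; your verification of the two candidate families and your low-order identities (the $\alpha^3$-relation $c_1(1-c_1)(1+c_2)=0$ and the $\alpha^4$-relation) are correct. The gap is in the inductive step, and it is exactly the point you flag as ``the main obstacle'' and then assert away. The coefficient multiplying the top unknown in the order-by-order relation is of the form $c_1\bigl(1-c_1^{\,n-1}\bigr)$ (in the paper's bookkeeping, $1-c_1^{\,n}$), and this multiplier vanishes \emph{identically} in the branch $c_1=1$, and also for $c_1=0$ and for $c_1=-1$ at alternating orders --- values which occur inside the family $c_2=-1$, $c_1=\theta$ arbitrary. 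So the claim that $\kappa_n(c_1)\neq 0$ ``in each branch'' is false precisely where it matters: in the branch $c_1=1$, $c_2=0$ the single coefficient identity is vacuous under the inductive hypothesis and gives no information about the new coefficient, so your scheme cannot even begin to show that $f(\alpha)=\alpha$ is the only solution there (and similarly it cannot handle $\theta=0,\pm1$ in the quadratic family; already your $\alpha^4$-relation forces $c_3=0$ in the branch $c_2=-1$ only when $c_1\notin\{0,\pm1\}$). The paper closes the induction by using the identities at \emph{three consecutive orders} simultaneously, its relations \eqref{c_1}, \eqref{c_2}, \eqref{c_3}: when the top multiplier vanishes, the previously undetermined coefficient reappears one order higher with the factor $1+c_2+nc_2c_1^{n-1}$ (nonzero for $c_1=1$, $c_2=0$), and for $c_1=-1$ with $n$ even a further combination of \eqref{c_2} and \eqref{c_3} yields $c_{n+1}\bigl[(1+(1-n)c_2)(1+(n+2)c_2)+n!\,c_2^2\bigr]=0$. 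This multi-order bootstrapping is the essential missing idea; without it the induction does not close.

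A smaller point: in the branch $c_1=0$ the recursion does not ``collapse to $f\equiv 0$'': the choice $c_1=0$, $c_2=-1$ gives $f(\alpha)=-\alpha^2$, which is the member $\theta=0$ of the second family, not the excluded zero solution. This does no harm to the final statement provided you organize the cases so that $c_2=-1$ is treated first and $c_2\neq-1$ is assumed afterwards, but as written your case analysis is not airtight.
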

\begin{proof} Substituting \eqref{f} and \eqref{ff} to the
\eqref{iterative f.e.1} we obtain
\begin{equation}\label{identity}
\sum_{k=1}^{\infty}a_k\alpha^k\equiv\sum_{k=1}^{\infty}b_k\alpha^k,
\end{equation}
which is equivalently to
\begin{equation}\label{a_k=b_k}
a_k=b_k \ \ \mbox{for all} \ \ k=1,2,...
\end{equation}
where
$$b_k=\sum_{l=1}^{k}c_{l+1}\big(\sum_{i_1+i_2+...+i_l=k}c_{i_1}
\cdot{c_{i_2}}\cdot...\cdot{c_{i_l}}\big)$$ and
\begin{equation}\label{a_k}
a_k=\left\{
\begin{array}{ll}
-1-c_2+c_1 \ \ \mbox{if} \ \ k=1,\vspace{1,5mm} \cr
c_{k+1}-c_k \ \ \mbox{if} \ \ k=2,3,...\vspace{1,5mm} \cr
\end{array}
 \right.
\end{equation}
From identity of \eqref{identity} for $k=1$ it holds
that $$(1-c_1)(1+c_2)=0.$$ For $k=2$ we see that
$$c_3(1-c_1^2)=c_2(1+c_2).$$ For $k=3$ we see that
$$c_4(1-c_1^3)=c_3(1+c_2+2c_1c_2).$$ These last three
equations imply that $$c_1=1, \, c_2=0, \, c_3=0 \,\, \mbox{or} \,\,
c_1 \,\,  \mbox{is arbitrary}, \, c_2=-1, \,  c_3=0.$$
Now we show by induction that $c_k=0$ for all $k=3,4,...$.

Suppose  $c_k=0$ for all $k=3,4,...n$. Then putting
$k=n$, $k=n+1$ and $k=n+2$ in \eqref{a_k=b_k} we get
$$c_{n+1}-c_{n}=\sum_{l=1}^{n}c_{l+1}\big(\sum_
{i_1+i_2+...+i_l=n}c_{i_1}\cdot{c_{i_2}}\cdot...\cdot
{c_{i_l}}\big)$$
$$c_{n+2}-c_{n+1}=
\sum_{l=1}^{n+1}c_{l+1}\big(\sum_{i_1+i_2+...+i_l=n+1}
c_{i_1}\cdot{c_{i_2}}\cdot...\cdot{c_{i_l}}\big)$$ and
$$c_{n+3}-c_{n+2}=
\sum_{l=1}^{n+2}c_{l+1}\big(\sum_{i_1+i_2+...+i_l=n+2}
c_{i_1}\cdot{c_{i_2}}\cdot...\cdot{c_{i_l}}\big).$$
 These
last three equations equivalent to
\begin{equation}\label{c_1}
c_{n+1}=c_{n+1}c_1^n
\end{equation}
\begin{equation}\label{c_2}
c_{n+2}(1-c_1^{n+1})=c_{n+1}(1+c_2+nc_2c_1^{n-1})
\end{equation}
and
\begin{equation}\label{c_3}
c_{n+3}(1-c_1^{n+2})=c_{n+2}+c_2c_{n+2}+\frac{n!}{2!}c_{n+1}c_2^2c_1^{n-2}+(n+1)c_{n+2}c_2c_1^{n}
\end{equation}
If $c_1\neq\pm1$ or $c_1=-1$ and $n$ is odd then \eqref{c_1} gives $c_{n+1}=0$. If $c_1=1$ then
\eqref{c_2} gives $c_{n+1}=0$, otherwise if $c_1=-1$ and $n$ is even then from
\eqref{c_2} and \eqref{c_3} we come to
\begin{equation*}
c_{n+1}[(1+(1-n)c_2)(1+(n+2)c_2)+n!c_2^2]=0
\end{equation*}
which shows again that $c_{n+1}=0$.
Thus for all $k=3,4,...$ we have
$c_k=0$. That is $$f(\alpha)=\alpha \ \ \mbox{and} \ \
f(\alpha)=\theta\alpha-\alpha^2$$ are solutions of the
iterative functional equation \eqref{iterative f.e.1},
where $\theta$ is an arbitrary constant.

This completes the proof.
\end{proof}
So, we have proved that
$$\gamma_\theta=\big\{(\alpha,\beta)\in[0,1]^2:
\beta=g(\alpha)=\alpha+1-\theta\}, \ \ \theta\in[0,2]$$ is one-parametric family of
invariant curves.

Note that  $$ \bigcup_{\theta\in[0,1)}\gamma_\theta=M_2, \quad
\bigcup_{\theta\in(1,2]}\gamma_\theta=M_1, \quad \gamma_1=M_0$$ and
$$\gamma_{\theta_1}\cap\gamma_{\theta_2}=\emptyset \ \ \mbox{for} \ \ \mbox{any} \ \ \theta_1\neq\theta_2.$$
Thus it suffices to study the dynamical system on each invariant curve $\gamma_\theta$. We have the following result (See Figure \ref{p1}).
\begin{thm}\label{als} The following assertions hold
\begin{itemize}
\item[(i)] If $\theta=1$ then for any initial point $t=(\alpha,\beta)\in{M_0}$, (i.e. $\alpha=\beta$) we have
$$\lim\limits_{m\rightarrow\infty}V_1^{(m)}(\alpha,\beta)=
\lim\limits_{m\rightarrow\infty}(\alpha^{(m)},\beta^{(m)})=(0;0).$$
\item[(ii)] If $\theta\in(1,2]$ then for any initial point $t=(\alpha,\beta)\in{\gamma_\theta}$ we have
$$\lim\limits_{m\rightarrow\infty}V_1^{(m)}(\alpha,\beta)=
\lim\limits_{m\rightarrow\infty}(\alpha^{(m)},\beta^{(m)})=(\theta-1;0).$$
\item[(iii)] If $\theta\in[0,1)$ then for any initial point $t=(\alpha,\beta)\in{\gamma_\theta}$ we have
$$\lim\limits_{m\rightarrow\infty}V_1^{(m)}(\alpha,\beta)=
\lim\limits_{m\rightarrow\infty}(\alpha^{(m)},\beta^{(m)})=(0;1-\theta).$$
\end{itemize}
\end{thm}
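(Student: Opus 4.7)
The plan is to exploit the conserved quantity visible in \eqref{V1}. From $\alpha' = \alpha - \alpha\beta$ and $\beta' = \beta - \alpha\beta$ we get
$$\beta' - \alpha' = \beta - \alpha,$$
so the difference $\beta - \alpha$ is invariant under $V_1$. Hence on the invariant curve $\gamma_\theta$ we have $\beta^{(m)} - \alpha^{(m)} = 1 - \theta$ for all $m \geq 0$.

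Next I would invoke Lemma \ref{cs}, which (specialized to $p_1 = p_2 = 1$) shows that the sequences $\alpha^{(m)}$ and $\beta^{(m)}$ are non-increasing in $[0,1]$, hence they converge to limits $\alpha^\star, \beta^\star \in [0,1]$. Because $V_1$ is continuous on $[0,1]^2$, the limit point $(\alpha^\star, \beta^\star)$ is a fixed point of $V_1$, so by \eqref{A11} (with $p_1 = p_2 = 1$) it satisfies $\alpha^\star \beta^\star = 0$; equivalently the limit lies on one of the coordinate axes $s_1 = (\alpha, 0)$ or $s_2 = (0, \beta)$.

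The conclusion is then a straightforward case analysis from the two relations $\beta^\star - \alpha^\star = 1 - \theta$ and $\alpha^\star \beta^\star = 0$. For $\theta = 1$ the system forces $\alpha^\star = \beta^\star = 0$, giving (i). For $\theta \in (1,2]$ we have $1 - \theta < 0$, so $\alpha^\star = 0$ would force $\beta^\star < 0$, which is impossible; therefore $\beta^\star = 0$ and $\alpha^\star = \theta - 1$, which indeed lies in $(0,1]$, giving (ii). The case $\theta \in [0,1)$ is symmetric (via the symmetry $\alpha \leftrightarrow \beta$ of the system), yielding $\alpha^\star = 0$ and $\beta^\star = 1 - \theta$, which is (iii).

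There is essentially no technical obstacle here: the nontrivial monotonicity and boundedness are already in Lemma \ref{cs}, and the existence of the conserved quantity $\beta - \alpha$ collapses the two-dimensional dynamics to a one-parameter family of segments. The only subtle point is ruling out the \emph{wrong} axis as the limit (i.e., why in case (ii) we cannot land on $\{\alpha = 0\}$), and this is handled purely by the sign of $1 - \theta$ together with the requirement $\alpha^{(m)}, \beta^{(m)} \geq 0$.
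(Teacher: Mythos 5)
Your proof is correct. The paper states Theorem \ref{als} without writing out an argument: it is meant to follow from the invariance of the lines $\gamma_\theta$ (obtained there by solving the functional equation \eqref{iterative f.e.1} in power series), together with Lemma \ref{cs} and the fixed-point equation \eqref{A11}. Your route uses exactly these ingredients, but you replace the paper's heavy derivation of the invariant curves by the one-line observation that $\beta'-\alpha'=\beta-\alpha$ for \eqref{V1}, i.e.\ $\beta-\alpha$ is an exact conserved quantity, which immediately gives invariance of each $\gamma_\theta$ and, more importantly, pins down the limit on the correct axis: monotonicity and boundedness from Lemma \ref{cs} give convergence of $(\alpha^{(m)},\beta^{(m)})$, continuity of $V_1$ forces the limit to be a fixed point, hence $\alpha^\star\beta^\star=0$ by \eqref{A11} with $p_1=p_2=1$, and the sign of $1-\theta$ together with nonnegativity selects the axis and the value $\theta-1$ (resp.\ $1-\theta$). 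This is a cleaner and more elementary justification than invoking the classification of all $C^\infty$ invariant curves, and it is complete: the only step worth making explicit is the standard fact that if a full trajectory of a continuous map converges, its limit is a fixed point, which you do state.
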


\begin{figure}[h]
  \centering
  \includegraphics[width=6 cm]{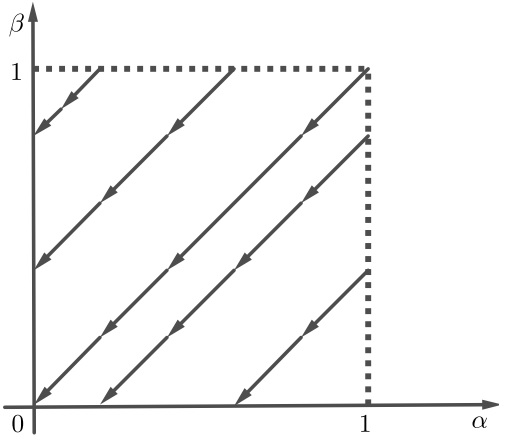}\\
  \caption{Dynamics of the operator \eqref{V1} on the invariant lines $\gamma_\theta$.
  The trajectory converges to the fixed point on the intersection of the line and the axes $O\alpha$ or $O\beta$.}\label{p1}
\end{figure}

Going back to the old variables $(x,y,u,v)$, when $p_1=p_2=1$ we obtain $\sigma_1=a$, $\sigma_2=b$ and
$$\Omega_\theta=\big\{(x,y,u,v)\in{S^{2,2}}: \quad
\frac{v}{u+v}=\frac{x}{x+y}+1-\theta\big\}$$ is an invariant surface respect
to the operator \eqref{A1} and it holds that
 $$ \bigcup_{\theta\in[0,1)}\Omega_\theta=T_2=
\Big\{(x,y,u,v)\in{S^{2,2}}: yv>xu \Big\},$$
$$\bigcup_{\theta\in(1,2]}\Omega_\theta=T_1=
\Big\{(x,y,u,v)\in{S^{2,2}}: yv<xu \Big\},$$
$$\Omega_1=T_0=\Big\{(x,y,u,v)\in{S^{2,2}}: yv=xu \Big\}$$
and
$$\Omega_{\theta_1}\cap\Omega_{\theta_2}=\emptyset \ \ \mbox{for any} \ \ \theta_1\neq\theta_2.$$
Thus it suffices to study the dynamical system on each invariant surfaces $\Omega_\theta$. As a corollary of Theorem \ref{als} we have the following

\begin{thm}\label{au} The following assertions hold
\begin{itemize}
\item[(i)] For any initial point $t=(x,y,u,v)\in{T_0}$, we have
$$\lim\limits_{m\rightarrow\infty}W^{(m)}(x,y,u,v)=
\lim\limits_{m\rightarrow\infty}(x^{(m)},y^{(m)},u^{(m)},v^{(m)})=(0;a;b;0).$$
\item[(ii)] If $\theta\in(1,2]$ then for any initial point $t=(x,y,u,v)\in \Omega_\theta$ the following holds
$$\lim\limits_{m\rightarrow\infty}W^{(m)}(x,y,u,v)=
\lim\limits_{m\rightarrow\infty}(x^{(m)},y^{(m)},u^{(m)},v^{(m)})=(a(\theta-1);a(2-\theta);b;0).$$
\item[(iii)] If $\theta\in[0,1)$ then for any initial point $t=(x,y,u,v)\in \Omega_\theta$ the following holds
$$\lim\limits_{m\rightarrow\infty}W^{(m)}(x,y,u,v)=
\lim\limits_{m\rightarrow\infty}(x^{(m)},y^{(m)},u^{(m)},v^{(m)})=(0;a;b\theta;b(1-\theta)).$$
\end{itemize}
\end{thm}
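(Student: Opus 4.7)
The plan is to reduce Theorem \ref{au} to Theorem \ref{als} via the projection $(x,y,u,v)\mapsto(\alpha,\beta)=\bigl(\frac{x}{x+y},\frac{v}{u+v}\bigr)$ and then pass to the limit in the explicit formulas (\ref{A13}). In Case 1 one has $\sigma_1=a$ and $\sigma_2=b$, so (\ref{A13}) simplifies to
\begin{align*}
x^{(m+1)} &= a\alpha^{(m)}(1-\beta^{(m)}),\\
y^{(m+1)} &= a\bigl(1-\alpha^{(m)}(1-\beta^{(m)})\bigr),\\
u^{(m+1)} &= b\bigl(1-\beta^{(m)}(1-\alpha^{(m)})\bigr),\\
v^{(m+1)} &= b\beta^{(m)}(1-\alpha^{(m)}).
\end{align*}
These are continuous functions of $(\alpha^{(m)},\beta^{(m)})$, so passing to the limit is legitimate whenever the projected trajectory converges, which is guaranteed by Lemma \ref{cs}.

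First I would observe that $t\in\Omega_\theta$ is equivalent to $(\alpha^{(0)},\beta^{(0)})\in\gamma_\theta$, because the defining equation of $\Omega_\theta$ is literally $\beta=\alpha+1-\theta$. Consequently $T_0=\Omega_1$ corresponds to the diagonal $\gamma_1=M_0$, while for $\theta\ne 1$ the surface $\Omega_\theta$ corresponds to an invariant line in $M_1\cup M_2$. The invariance of $\Omega_\theta$ under $W$ thus follows from the invariance of $\gamma_\theta$ under $V_1$, which was already established.

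The second step is to invoke Theorem \ref{als} to read off $\lim_{m\to\infty}(\alpha^{(m)},\beta^{(m)})$: this limit equals $(0,0)$ in case (i), $(\theta-1,0)$ in case (ii), and $(0,1-\theta)$ in case (iii). Substituting each of these pairs into the display above and using continuity yields the stated limits $(0,a,b,0)$, $(a(\theta-1),a(2-\theta),b,0)$, and $(0,a,b\theta,b(1-\theta))$, respectively. For instance, in case (ii) one has $1-\beta^{(m)}\to 1$ and $1-\alpha^{(m)}\to 2-\theta$, so $x^{(m+1)}\to a(\theta-1)$, $y^{(m+1)}\to a(1-(\theta-1))=a(2-\theta)$, $u^{(m+1)}\to b$, and $v^{(m+1)}\to 0$; the other two cases are analogous.

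The main step requiring attention is the bookkeeping that identifies the correct parameter $\theta$ for a given initial point, namely the recognition that $\Omega_\theta$ and $\gamma_\theta$ are defined by the same linear relation in $(\alpha,\beta)$. Once this is in place, the rest is substitution into continuous formulas, so I do not expect any genuine obstacle. The only mild subtlety is the endpoint values $\theta\in\{0,2\}$, where the limit sits at an extremal point of $F_{11}$ or $F_{12}$; the formulas remain continuous at these points, so no separate argument is needed there.
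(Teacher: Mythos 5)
Your proposal is correct and follows essentially the same route as the paper: the paper presents Theorem \ref{au} precisely as a corollary of Theorem \ref{als}, obtained by identifying $\Omega_\theta$ with the invariant line $\gamma_\theta$ through the change of variables \eqref{A9} and then passing to the limit in the formulas \eqref{A13} with $\sigma_1=a$, $\sigma_2=b$. Your limit computations in all three cases match the stated values, so nothing further is needed.
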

\begin{cor} The operator \eqref{A1} has infinitely many
fixed points and for each such fixed point there is nonintersecting
trajectories which converge to the fixed points.
\end{cor}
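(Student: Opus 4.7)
My plan is to deduce the corollary directly from the explicit description of $\mathrm{Fix}(W)$ obtained in Section~2 together with Theorem~\ref{au}. First I would observe that Section~2 established
\[
\mathrm{Fix}(W)=F_{11}\cup F_{12},
\]
where each of $F_{11}$ and $F_{12}$ is a one-parameter family (a closed segment); in particular $\mathrm{Fix}(W)$ is uncountable, so $W$ has infinitely many fixed points. This disposes of the first half of the statement.

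For the second half I would use the invariant surfaces $\Omega_\theta$, $\theta\in[0,2]$, and the limit points identified in Theorem~\ref{au}. The key facts I would invoke are: (a) each $\Omega_\theta$ is $W$-invariant, so any trajectory starting in $\Omega_\theta$ remains entirely in $\Omega_\theta$; (b) by Theorem~\ref{au} the whole surface $\Omega_\theta$ is attracted to a single fixed point
\[
s(\theta)=\begin{cases}
(0,a,b,0) & \theta=1,\\[1mm]
(a(\theta-1),a(2-\theta),b,0)\in F_{12} & \theta\in(1,2],\\[1mm]
(0,a,b\theta,b(1-\theta))\in F_{11} & \theta\in[0,1);
\end{cases}
\]
and (c) $\Omega_{\theta_1}\cap\Omega_{\theta_2}=\emptyset$ whenever $\theta_1\neq\theta_2$, as noted after the definition of the $\Omega_\theta$'s. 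Combining (a) and (c) forces any trajectory contained in $\Omega_{\theta_1}$ to be disjoint (setwise) from any trajectory contained in $\Omega_{\theta_2}$ when $\theta_1\neq\theta_2$.

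To conclude, I would note that the parametrization $\theta\mapsto s(\theta)$ is injective on $[0,2]$ and its image exhausts $F_{11}\cup F_{12}$, so to each fixed point $s^*\in\mathrm{Fix}(W)$ I can associate a unique $\theta(s^*)\in[0,2]$ and the invariant surface $\Omega_{\theta(s^*)}$; every trajectory with initial condition in $\Omega_{\theta(s^*)}$ converges to $s^*$ by Theorem~\ref{au}, while trajectories attracted to different fixed points lie in disjoint invariant surfaces and therefore do not intersect. The only minor subtlety\,---\,and really the only thing that needs a sentence of care\,---\,is checking that $\theta\mapsto s(\theta)$ is indeed injective at the boundary value $\theta=1$, i.e.\ that $(0,a,b,0)$ is reached only from $\Omega_1$; this follows because $s(\theta)\in\Omega_\theta$ and the $\Omega_\theta$'s are disjoint, so the correspondence between fixed points and their basins of attraction is in fact one-to-one, which is even stronger than what the corollary asks for.
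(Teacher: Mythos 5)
Your proposal is correct and follows essentially the same route the paper intends: the uncountability of $\mathrm{Fix}(W)=F_{11}\cup F_{12}$ from Section~2, plus the pairwise disjoint invariant surfaces $\Omega_\theta$ and the convergence statements of Theorem~\ref{au}, which together give a one-to-one correspondence between the surfaces and the fixed points. Your extra check that $\theta\mapsto s(\theta)$ is injective and exhausts $\mathrm{Fix}(W)$ is a welcome (and correct) precision that the paper leaves implicit.
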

\subsection{Case 2}
Let we have $$p_1>1>p_2>0,$$ or $$p_2>1>p_1>0.$$
\begin{lemma}\label{A15}
The set $$M_1=\big\{(\alpha,\beta)\in\Delta: \quad \beta\geq\alpha\big\}$$ is an invariant
set respect to the operator \eqref{A10} when $p_1>1>p_2>0$.

The set $$M_2=\big\{(\alpha,\beta)\in\Delta: \quad \beta\leq\alpha\big\}$$ is an invariant
set respect to the operator \eqref{A10} when $p_2>1>p_1>0$.
\end{lemma}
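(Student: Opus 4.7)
The plan is to prove invariance directly, by an explicit sign computation for $\beta'-\alpha'$ (resp.\ $\alpha'-\beta'$) using the formulas in \eqref{A10}. The point is that after putting everything over a common denominator and regrouping, the sign of the numerator is governed by two contributions that are \emph{both} of the right sign under the hypotheses on $p_1, p_2$ and on $(\alpha,\beta)$, so no dynamical argument is needed.

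First I would write
\[
\beta' - \alpha' = \frac{\beta(1-\alpha)\bigl(1+(p_1-1)\alpha\beta\bigr) - \alpha(1-\beta)\bigl(1+(p_2-1)\alpha\beta\bigr)}{\bigl(1+(p_1-1)\alpha\beta\bigr)\bigl(1+(p_2-1)\alpha\beta\bigr)}.
\]
On $\Delta$ the denominator is strictly positive in either of the two cases considered: the factor corresponding to the index with $p_j>1$ is $\geq 1$, and the factor corresponding to the index with $p_j\in(0,1)$ is bounded below by $p_j>0$ because $\alpha\beta\in[0,1]$. Hence the sign of $\beta'-\alpha'$ coincides with the sign of the numerator. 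Next I would expand and regroup that numerator as
\[
(\beta-\alpha) + \alpha\beta\Bigl[(p_1-1)\beta(1-\alpha) - (p_2-1)\alpha(1-\beta)\Bigr].
\]
Under the first hypothesis ($\beta\geq\alpha$ with $p_1>1>p_2>0$) the leading term is $\geq 0$ by assumption, and inside the bracket the first summand is $\geq 0$ because $p_1-1>0$, while the second summand equals $(1-p_2)\alpha(1-\beta)\geq 0$ because $p_2<1$. Therefore $\beta'\geq\alpha'$, i.e.\ $V(M_1)\subset M_1$.

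For the second statement I would run the symmetric computation for $\alpha'-\beta'$, obtained from the above by swapping $\alpha\leftrightarrow\beta$ and $p_1\leftrightarrow p_2$; the identical regrouping yields
\[
(\alpha-\beta) + \alpha\beta\Bigl[(p_2-1)\alpha(1-\beta) - (p_1-1)\beta(1-\alpha)\Bigr],
\]
whose two summands are $\geq 0$ under $\alpha\geq\beta$ and $p_2>1>p_1>0$, giving $V(M_2)\subset M_2$. The only place that needs any care is the sign analysis in the bracket: the decisive structural observation is that $p_1-1$ and $p_2-1$ have \emph{opposite} signs in each of the two regimes, which is exactly why the $\alpha\beta[\,\cdot\,]$ correction term never cancels the sign of $\beta-\alpha$ (resp.\ $\alpha-\beta$) coming from the hypothesis. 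Apart from this algebraic bookkeeping there is no real obstacle.
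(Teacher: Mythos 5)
Your computation is correct: the regrouped numerator $(\beta-\alpha)+\alpha\beta\bigl[(p_1-1)\beta(1-\alpha)-(p_2-1)\alpha(1-\beta)\bigr]$ and the positivity of the denominator give exactly the sign comparison needed, and this is precisely the ``straightforward'' verification the paper omits (its proof of Lemma \ref{A15} is just the word ``Straightforward''). The only cosmetic addition worth making is a one-line remark that $(\alpha',\beta')$ stays in $\Delta$, which follows from Lemma \ref{cs} (or from $\alpha'\le\alpha$, $\beta'\le\beta$ and nonnegativity), so that $V(M_i)\subset M_i$ and not merely $\beta'\ge\alpha'$ (resp.\ $\alpha'\ge\beta'$).
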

\begin{proof} Straightforward.
\end{proof}
In this case to find invariant
curves for the operator \eqref{A10} leads to solve the following iterative functional
equation
\begin{equation}\label{iterative f.e.2}
\begin{aligned}
&f(\alpha)\big(\alpha-f(\alpha)\big)(1-\alpha)[1+(p_1-1)f(f(\alpha))]
=\\ &\alpha\big(f(\alpha)-f(f(\alpha))\big)[1+(p_2-1)\alpha+(p_1-p_2)f(\alpha)]
\end{aligned}
\end{equation}
where
$f(\alpha)={\alpha\big(1-g(\alpha)\big)}\setminus{[1+(p_1-1)\alpha{g(\alpha)}]}$ which is not identically zero.

As above when we search the solution of the last functional equation in the space
$C^\infty{[0,1]}$ we get
\begin{equation}\label{identity 2}
\sum_{k=1}^{\infty}a_k\alpha^k\equiv\sum_{k=1}^{\infty}b_k\alpha^k,
\end{equation}
which is equivalently to
\begin{equation}\label{a_k=b_k 2}
a_k=b_k \ \ \mbox{for all} \ \ k=1,2,...
\end{equation}
where
$$a_k=\sum_{j=0}^{k}e_jq_{k-j}, \quad b_k=\sum_{j=0}^{k}n_jm_{k-j}$$
and

\begin{equation}\label{e_j}
e_j=\left\{
\begin{array}{ll}
1-c_1 \ \ \mbox{if} \ \ j=0, \vspace{1,5mm} \cr
-\sum_{l=1}^{j}c_{l+1}\big(\sum_{i_1+i_2+...+i_l=j}c_{i_1}
\cdot{c_{i_2}}\cdot...\cdot{c_{i_l}}\big) \ \ \mbox{if} \ \ j=1,2,...,k. \vspace{1,5mm} \cr
\end{array}
 \right.
\end{equation}

\begin{equation}\label{q_j}
q_j=\left\{
\begin{array}{ll}
1 \ \ \mbox{if} \ \ j=0, \vspace{1,5mm} \cr
(p_2-1)+(p_1-p_2)c_1 \ \ \mbox{if} \ \ j=1, \vspace{1,5mm} \cr
(p_1-p_2)c_j \ \ \mbox{if} \ \ j=2,3,...,k. \vspace{1,5mm} \cr
\end{array}
 \right.
\end{equation}

\begin{equation}\label{n_j}
n_j=\left\{
\begin{array}{ll}
1 \ \ \mbox{if} \ \ j=0, \vspace{1,5mm} \cr
(p_1-1)\sum_{l=1}^{j}c_{l}\big(\sum_{i_1+i_2+...+i_l=j}c_{i_1}
\cdot{c_{i_2}}\cdot...\cdot{c_{i_l}}\big) \ \ \mbox{if} \ \ j=1,2,...,k. \vspace{1,5mm} \cr
\end{array}
 \right.
\end{equation}

\begin{equation}\label{m_j}
m_j=\left\{
\begin{array}{ll}
1-c_1 \ \ \mbox{if} \ \ j=0, \vspace{1,5mm} \cr
-1+c_1-c_2 \ \ \mbox{if} \ \ j=1, \vspace{1,5mm} \cr
c_j-c_{j+1} \ \ \mbox{if} \ \ j=2,3,...,k. \vspace{1,5mm} \cr
\end{array}
 \right.
\end{equation}
Substituting \eqref{e_j}, \eqref{q_j}, \eqref{n_j}, \eqref{m_j} to the
\eqref{a_k=b_k 2} then when $k\geq3$ we attain recurrence formula for $c_k$

\begin{equation}
\begin{aligned}
&c_{k}(1-c_{1}^{k-1})=\sum_{j=1}^{k-2}\big[(c_{k-j-1}-c_{k-j})(p_1-1)d_j+(p_1-p_2)c_{k-j}d'_j\big]
\\
&+c_{k-1}-(p_1-1)d_{k-2}+(1-c_1)(p_1-1)d_{k-1}(1-c_1)(p_1-p_2)c_k\\
&+(p_2-1)d'_{k-2}-\sum_{l=1}^{k-1}c_{l+1}\big(\sum_{i_1+i_2+...+i_l=k}c_{i_1}\cdot{c_{i_2}}\cdot...
\cdot{c_{i_l}}\big)
\end{aligned}
\end{equation}
where $c_k$ is the coefficient at \eqref{f} and $$d_j=\sum_{l=1}^
{j}c_l\big(\sum_{i_1+i_2+...+i_l=j}c_{i_1}\cdot{c_{i_2}}\cdot...
\cdot{c_{i_l}}\big), \quad d'_j=\sum_{l=1}^
{j}c_{l+1}\big(\sum_{i_1+i_2+...+i_l=j}c_{i_1}\cdot{c_{i_2}}\cdot...
\cdot{c_{i_l}}\big).$$

We were not able to solve these systems for the coefficients. Therefore the following is an open problem:

{\bf Open problem.} Describe all solutions of the functional equation (\ref{iterative f.e.2}).\\

 Numerical analysis  shows that (see at the Figure \ref{p2} and Figure \ref{p3}) in the cases $p_1>1>p_2>0$ (resp. in the case $p_2>1>p_1>0$)
 there are nonintersecting concave (resp. convex) invariant curves and the trajectory started on an invariant curve converges to the intersecting point of
 the invariant curve and the axes $O\alpha$ or $O\beta$.
\begin{figure}
  \centering
  \includegraphics[width=13 cm]{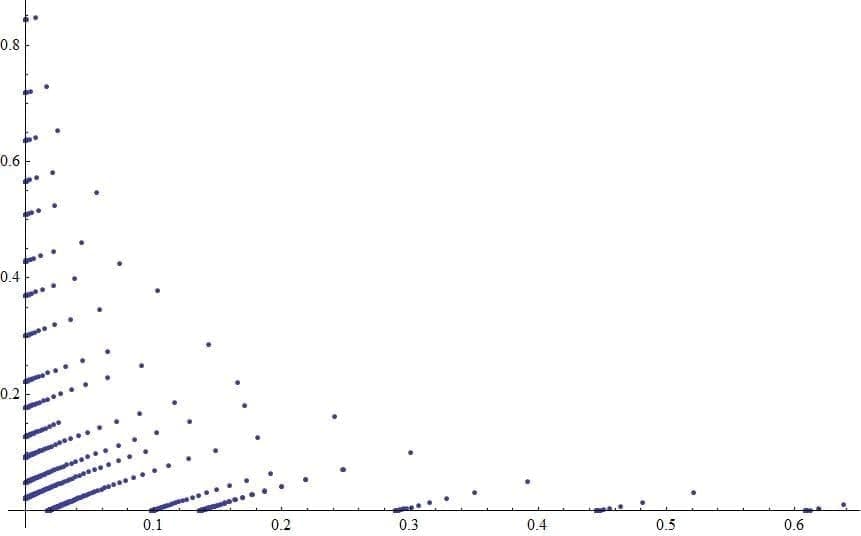}\\
  \caption{Dynamics of the operator \eqref{A10} on invariant concave curves for the case $p_1>1>p_2>0$.
  The trajectory converges to the fixed point on the intersection of the invariant curve and the axes $O\alpha$ or $O\beta$.}\label{p2}
\end{figure}
\begin{figure}
  \centering
  \includegraphics[width=13 cm]{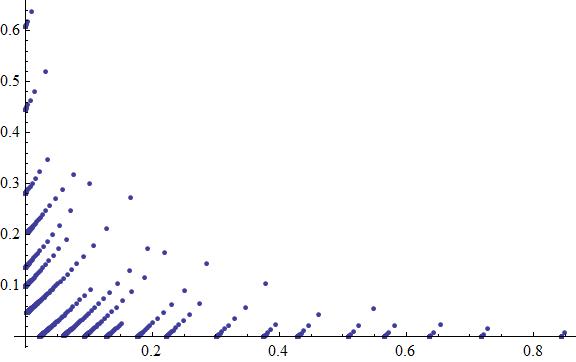}\\
  \caption{Dynamics of the operator \eqref{A10} on invariant convex curves for the case $p_2>1>p_1>0$.
  The trajectory converges to the fixed point on the intersection of the invariant curve and the axes $O\alpha$ or $O\beta$.}\label{p3}
\end{figure}
These numerical analysis and the above considered particular cases allowed us to make the following

{\bf Conjecture.} If $p_1>1>p_2>0$ (or $p_2>1>p_1>0$) then for each fixed point $p\in {\rm Fix}(W)$ there exists unique invariant surface $\Gamma_p\subset S^{2,2}$, such that for any initial point $s^{(0)}\in \Gamma_p$  the limit of its trajectory (under operator (\ref{A1})) converges to the fixed point $p$.
Moreover,
$$\bigcup_{p\in {\rm Fix}(W)}\Gamma_p=S^{2,2}.$$

\section{Conclusion}
 Let $s^{(0)} = (x, y, u, v)\in S^{2,2}$ be an
initial state, i.e. the probability distribution on the set of female and male types.

The following are interpretations of our results:

\begin{itemize}
\item The set of all fixed points is subset of the boundary of $S^{2,2}$ means that at least one
type of female or male  in future of population will surely disappear.

\item The existence of invariant curves (in particular lines) means that if states of the population initially satisfied
a relation (described the invariant set) then the future of the population remains in the same relation.

\item Regularity of the operator means that for any initial state of the population we can explicitly determine its limit (final) state.

\item For any $s^{(0)}\in T_0$  as time goes to infinity the type 1 of female and type 2 of males will disappear (die).

\item For any $s^{(0)}\in T_1$  as time goes to infinity the type 2 of males will disappear.

\item For any $s^{(0)}\in T_2$  as time goes to infinity the type 1 of females will disappear.

   \end{itemize}

\end{document}